\newtheorem{lem}{Lemma}
\newtheorem{thm}[lem]{Theorem}
\newtheorem{cor}[lem]{Corollary}
\newtheorem{rem}[lem]{Remark}
\begin{document}
\title{A note on edge degree and spanning trail containing given edges}

\author{
Weihua Yang$^a$\footnote{Corresponding author. E-mail:
ywh222@163.com (W.~Yang).}, \quad Hongjian Lai$^b$,\quad Baoyindureng Wu$^c$\\
\small $^a$Department of Mathematics, Taiyuan University of
Technology, Taiyuan 030024, China\\
\small $^b$Department of Mathematics, West Virginia University,
Morgantown, WV 26506, United States\\
\small $^c$College of Mathematics and Systems Science, Xinjiang
University, Urumqi 830049, China}

\maketitle

\newcommand{\Aut}{\hbox{\rm Aut}}

\date{}

\maketitle {\flushleft\bf Abstract:} {\small  Let $G$ be a simple
graph with $n\geq4$ vertices and $d(x)+d(y)\geq n+k$ for each edge
$xy\in E(G)$.   In this work we prove that $G$ either contains a
spanning closed trail containing  any given edge set $X$ if $|X|\leq
k$, or $G$ is a well characterized graph. As a corollary, we show
that line graphs of such graphs are $k$-hamiltonian.}

 {\flushleft\bf Keywords}:
Line graph;  Supereulerian graph; Collapsible graph; Spanning trail;
Hamiltonian line graph


\section{Introduction}
Unless stated otherwise, we follow \cite{bondy} for terminology and
notations, and
 consider finite connected simple  graphs.  For a
vertex $v\in V(G)$, $N_G(v)$ denotes  the set of the neighbors of
$v$ in $G$ and for  $A\subseteq V(G)$, $N_G(A)$  denotes  the set
$(\bigcup_{v\in A}N_G(v))\setminus A$.

Catlin in \cite{catlin} introduced the notion of a collapsible
graph. For a graph $G$, let $O(G)$ denote the set of odd degree
vertices of $G$. A graph $G$ is $eulerian$ if it has a closed trail
through all edges of  $G$. It is well-known (and easily seen) that
this is equivalent to $G$ being connected with $O(G)=\emptyset$, and
$G$ is $supereulerian$ if $G$ has a spanning eulerian subgraph. A
graph $G$ is $collapsible$ if for any subset $R \subseteq V (G)$
with $|R| \equiv 0 (\mod 2 )$, $G$ has a spanning connected subgraph
$H_R$ such that $O(H_R) = R$. Note that when $R =\emptyset$, a
spanning connected subgraph $H$ with $O(H)=\emptyset$ is a spanning
eulerian subgraph of $G$. Thus every collapsible graph is
supereulerian. Catlin \cite{catlin} showed that any graph $G$ has a
unique subgraph $H$ such that every component of $H$ is a maximally
collapsible connected subgraph of $G$ and every non-trivial
collapsible subgraph of $G$ is contained in a component of $H$. For
a subgraph $H$ of $G$, the graph $G/H$ is obtained from $G$ by
identifying the two ends of each edge in $H$ and then deleting the
resulting loops. The contraction $G/H$ is called the $reduction$ of
$G$ if $H$ is the maximal collapsible subgraph of $G$, i.e. there is
no non-trivial collapsible subgraphs in $G/H$. A vertex in $G/H$ is
called {\em non-trivial} if the vertex is obtained by contracted a
non-trivial connected collapsible subgraph. A graph $G$ is $reduced$
if it is the reduction of itself.

Catlin in \cite{catlinspanning} showed the following:

\begin{thm}[Catlin~\cite{catlinspanning}]\label{catlintrail}
Let $G$ be a connected simple graph on $n$ vertices and let $u,v\in
V(G)$. If
\begin{equation} \label{eq:a}
d(x)+d(y)\geq n
 \end{equation}
for each edge $xy\in E(G)$, then exactly one of the following holds:

$(i)$ $G$ has a spanning $(u,v)$-trail.

$(ii)$ $d(z)=1$ for some vertex $z\not\in\{u,v\}$.

$(iii)$ $G=K_{2,n-2},u=v$ and $n$ is odd.

$(iv)$ $G=K_{2,n-2},u\not=v, uv\in E(G)$, $n$ is even, and
$d(u)=d(v)=n-2.$

$(v)$ $u=v$, and $u$ is the only vertex with degree $1$ in $G$.
\end{thm}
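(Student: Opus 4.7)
The plan is to use Catlin's reduction method. Let $H$ be the maximal collapsible subgraph of $G$ and set $G' = G/H$. The first observation is that a spanning $(u',v')$-trail in $G'$, where $u', v'$ are the images of $u, v$ under the contraction, lifts to a spanning $(u,v)$-trail in $G$: at each vertex of $G'$ arising from a contracted collapsible component, the lifted trail uses a subset $R$ of boundary vertices of even size, and by the very definition of collapsibility the component admits a spanning connected subgraph whose odd-degree vertex set is precisely $R$, which can be spliced in. Hence it suffices to prove the conclusion for $G'$ in place of $G$, while tracking how the exceptional configurations lift back.

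The main work is a structural classification of $G'$ under the degree hypothesis. Since $K_3$ is collapsible, every reduced graph is triangle-free, and finer arguments based on which small graphs are collapsible restrict the local structure of $G'$ further. Transferring the degree-sum condition $d_G(x)+d_G(y)\geq n$ into a usable statement about $G'$ is the delicate step, because a contracted component can absorb many edges of $G$: one has to count carefully the contribution of the contracted pieces to each vertex-degree in $G'$ in order to recover an Ore-type bound on $G'$ itself. The goal of this analysis is to show that $G'$ must be one of $K_1$, $K_2$, or $K_{2,t}$ for some $t \geq 2$.

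The final step is to extract the five conclusions (i)--(v) from these three possibilities. If $G' = K_1$ then $G$ itself is collapsible, so (i) holds for every choice of $u,v$. If $G' = K_2$ then a spanning $(u',v')$-trail always exists unless lifting creates a leaf of $G$ outside $\{u,v\}$, which produces cases (ii) and (v). If $G' = K_{2,t}$ then $G$ must in fact be $K_{2,n-2}$ (up to collapsible padding that the degree condition rules out), and here parity along the bipartition obstructs a spanning $(u,v)$-trail in precisely the configurations (iii) and (iv).

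I expect the main obstacle to be the classification of $G'$: translating the hypothesis on $G$ into a clean condition on $G'$ without losing the $K_{2,n-2}$ exception requires a delicate incidence count, since the contracted pieces can hide arbitrarily many edges and hence mask the Ore-type density of the original graph. Once the classification is in hand, isolating the exact parity conditions in (iii) and (iv) within the $K_{2,t}$ case, and identifying the leaf-vertex sources of (ii) and (v) in the $K_2$ case, is a careful but essentially routine analysis.
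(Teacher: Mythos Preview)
The paper does not give an independent proof of this theorem; it simply records that Theorem~\ref{mainresult1} (Li--Yang) is a strengthening and observes in one line that, by the definition of collapsibility, Theorem~\ref{mainresult1} implies Theorem~\ref{catlintrail}. Your reduction-method outline is thus closer in spirit to the proof of Theorem~\ref{mainresult1} itself than to anything the present paper actually carries out, and the technical step you single out as ``delicate''---that the edge-degree hypothesis $d(x)+d(y)\ge n$ survives passage to the reduction---is exactly the content of the paper's Lemma~\ref{preserve}.

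There is, however, a genuine gap in your structural classification of $G'$. You assert that the reduction must be one of $K_1$, $K_2$, or $K_{2,t}$ with $t\ge 2$, but this is false: the correct trichotomy, as recorded in Theorem~\ref{mainresult1}, is that either $G$ is collapsible, or the reduction is a \emph{star} $K_{1,t-1}$ whose leaves are all trivial vertices of $G$ sharing a common neighbour, or $G=K_{2,n-2}$. For a concrete witness, take $n\ge 8$ and let $G$ be $K_{n-3}$ together with three pendant vertices attached to a single vertex of the clique; every edge satisfies $d(x)+d(y)\ge n$, yet the reduction is $K_{1,3}$, which does not appear on your list. This missing star case is precisely what produces conclusion~(ii) of the theorem when $G$ has several degree-$1$ vertices, so your scheme of extracting (ii) and (v) solely from the case $G'=K_2$ cannot succeed. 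Once you replace ``$K_2$ or $K_{2,t}$'' by ``$K_{1,t-1}$ (all leaves trivial with a common neighbour) or $G=K_{2,n-2}$'' and redo the final case analysis accordingly, the rest of your plan goes through and matches the route the paper implicitly relies on.
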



The following stronger result strengthens Catlin's theorem above :

\begin{thm}[Li and Yang~\cite{liyang}]\label{mainresult1}
Let $G$ be a connected simple graph of order $n\geq4$ and $G$
satisfies $(\ref{eq:a})$ for every edge of $G$. Then exactly one of
the following holds:

 $(i)$  $G$ is collapsible.

 $(ii)$ The reduction of $G$ is $K_{1,t-1}$ for $t\geq3$ such that
all of the vertices of degree 1 are trivial and they have the same
neighbor in $G$, $t\leq\frac{n}2$. Moreover, if $t=2$, then
$G-\{v\}$ is collapsible for a vertex $v$ in the $K_2$.

 $(iii)$ $G$ is $K_{2,n-2}$.
\end{thm}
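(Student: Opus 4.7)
The plan is to pass to the Catlin reduction $G_0$ of $G$ and use the hypothesis $d(x)+d(y)\ge n$ on every edge to force $G_0$ into one of three shapes: $K_1$ (giving (i)), a star (giving (ii)), or a reduction that lifts back to $K_{2,n-2}$ (giving (iii)).

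First, let $H$ be the maximal collapsible subgraph and $G_0=G/H$, with $V(G_0)=\{v_1,\dots,v_s\}$ and preimages $H_i$ of orders $n_i$ summing to $n$. Recall $G_0$ is triangle-free since $K_3$ is collapsible. If $s=1$ then $G$ itself is collapsible and conclusion (i) holds, so assume $s\ge 2$. I would then translate the degree condition to $G_0$: for any edge $v_iv_j\in E(G_0)$, lift it to $xy\in E(G)$ with $x\in V(H_i)$, $y\in V(H_j)$. Since $d_G(x)\le (n_i-1)+\sum_{k\sim_{G_0}i}n_k$ (and analogously for $y$) while $d_G(x)+d_G(y)\ge n=\sum_k n_k$, we obtain
\[
\sum_{k\sim_{G_0} i}n_k+\sum_{k\sim_{G_0} j}n_k\;\ge\;\sum_{k\ne i,j}n_k+2.
\]
Triangle-freeness gives $N_{G_0}(v_i)\cap N_{G_0}(v_j)=\emptyset$, so this rearranges to
\[
\sum_{k\notin\{i,j\}\cup N(v_i)\cup N(v_j)}n_k\;\le\;n_i+n_j-2,
\]
that is, every edge of $G_0$ weighted-dominates $V(G_0)$; in particular, when $v_i$ and $v_j$ are both trivial, $\{v_i,v_j\}$ dominates $G_0$.

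Second, I would classify $G_0$ under this weighted dominating-edge condition. A case analysis that combines triangle-freeness with the fact that $K_{3,3}-e$, the bowtie, and similar small dense graphs are collapsible (hence absent from $G_0$ as subgraphs) narrows the possibilities to stars $K_{1,t-1}$ and complete bipartite graphs $K_{2,m}$. The star case is conclusion (ii); to verify the trivial-leaf and common-neighbor claims I would argue by maximality of $H$: a non-trivial leaf preimage $H_\ell$ or two leaves attached to distinct $G$-vertices inside the center's preimage would let us enlarge $H$ by absorbing a standard Catlin gadget, a contradiction. The bound $t\le n/2$ drops out from $d(x)+d(y)\ge n$ summed around the leaves, and the $t=2$ moreover clause follows by applying Theorem~\ref{catlintrail} to $G$ with one end of the $K_2$ removed. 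In the $K_{2,m}$ case, uncontracting and matching the edge-degree condition pins $G$ down to exactly $K_{2,n-2}$, giving (iii).

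The main obstacle will be the classification step: rigorously ruling out every triangle-free reduced graph other than stars and $K_{2,m}$ under the transferred degree condition, and verifying the fine structural conditions on leaves in case (ii). These subcases would be handled by attempting to enlarge $H$ via Catlin's basic collapsible gadgets whenever a forbidden configuration is conjectured---each such attempt either succeeds, contradicting the maximality of $H$ and thus eliminating the configuration, or pins down the structure exactly as claimed.
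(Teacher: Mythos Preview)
This theorem is not proved in the present paper; it is quoted from Li and Yang~\cite{liyang} and used as a black box in the proof of Theorem~\ref{thm7}. So there is no proof here to compare against, only the auxiliary Lemma~\ref{preserve}, which the authors do prove and which is closely related to your transfer step.

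On that step, your bound $d_G(x)\le (n_i-1)+\sum_{k\sim_{G_0}i}n_k$ is valid but needlessly weak. Between any two maximal collapsible pieces $H_i$ and $H_k$ of $G$ there is at most one edge (two edges would make $H_i\cup H_k$ collapsible after contraction, contradicting maximality), so in fact $d_G(x)\le (n_i-1)+d_{G_0}(v_i)$. Plugging this into $d_G(x)+d_G(y)\ge n$ gives the clean statement $d_{G_0}(v_i)+d_{G_0}(v_j)\ge |V(G_0)|$ for every edge of $G_0$; this is exactly the paper's Lemma~\ref{preserve}. Your weighted-domination inequality is a consequence of this but carries less information, which is why your classification step looks harder than it needs to be. With the sharp inequality $\xi(G_0)\ge |V(G_0)|$ in hand, the classification of a reduced (hence triangle-free, $C_4$-free, $\delta\le 3$) graph on $s$ vertices with $\xi\ge s$ is much shorter, and this is presumably how the cited paper proceeds.

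Your plan for the fine structure in case~(ii) is along the right lines, but note one slip: invoking Theorem~\ref{catlintrail} for the ``moreover'' clause when $t=2$ is circular in spirit, since in \cite{liyang} Theorem~\ref{mainresult1} is advertised as strengthening Theorem~\ref{catlintrail}. That clause should instead come directly from the definition of the reduction: if the reduction of $G$ is $K_2$ with vertices $u,v$ and $v$ is trivial (a single vertex of $G$), then the preimage of $u$ is $G-\{v\}$ and is collapsible by construction.
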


By the definition of the collapsible graph,
Theorem~\ref{mainresult1} implies Theorem~\ref{catlintrail}.

We call a  closed trail of $G$ is dominating closed trail if the
trail contains at least one vertex of each edge of $G$.
Theorem~\ref{mainresult1} also implies a previous
result~\cite{brualdi} by using the following theorem:

\begin{thm}[Harary and Nash-Williams~\cite{harary}]\label{nash}
Let $G$ be a graph with at least 4 vertices. The line graph $L(G)$
is hamiltonian if and only if $G$ ha a dominating closed trail.

\end{thm}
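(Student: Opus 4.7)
The plan is to prove both directions of the biconditional directly, by translating between a dominating closed trail of $G$ and a Hamilton cycle of $L(G)$ via the correspondence $V(L(G))=E(G)$, where adjacency in $L(G)$ is given by sharing an endpoint in $G$.

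For the sufficiency direction, suppose $G$ admits a dominating closed trail $T=v_0e_1v_1\cdots e_kv_k$ with $v_k=v_0$. The edges $e_1,\ldots,e_k$, listed in cyclic order, already form a cycle in $L(G)$ since $e_i$ and $e_{i+1}$ share the vertex $v_i$. It remains to weave the edges of $E(G)\setminus E(T)$ into this cycle. For each such edge $f$ the dominating hypothesis provides an endpoint $u_f\in V(T)$; grouping, for each $v\in V(T)$, all edges $f$ with $u_f=v$ and inserting them consecutively at a single occurrence of $v$ in the trail --- that is, between some $e_i$ and $e_{i+1}$ with $v_i=v$ --- preserves the property that consecutive edges in the resulting cyclic sequence share an endpoint. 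Since every vertex of $L(G)$ appears exactly once, the result is a Hamilton cycle.

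For the necessity direction, let $C=e_1e_2\cdots e_me_1$ be a Hamilton cycle of $L(G)$. Simplicity of $G$ guarantees that each consecutive pair $e_i,e_{i+1}$ shares a \emph{unique} vertex $v_i$, and that $v_{i-1}$ and $v_i$ are both endpoints of $e_i$, being distinct precisely when $e_i$ can be traversed in $G$ from one endpoint to the other. I would then form $T$ by retaining only those $e_i$ with $v_{i-1}\ne v_i$, in their cyclic order. If $e_i$ and $e_j$ are consecutive retained edges with the edges in between all skipped, the skip condition forces $v_i=v_{i+1}=\cdots=v_{j-1}$, so $e_j$ begins at the same vertex where $e_i$ ends; hence $T$ is a closed walk, and since each edge of $G$ appears at most once in $C$, a closed trail. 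Every skipped edge $e_i$ is incident to $v_{i-1}=v_i\in V(T)$; in the degenerate case when no edge is retained (all $v_i$ coincide at some $v$, so $G$ is a star centered at $v$), $T$ collapses to the trivial trail at $v$, which still dominates every edge.

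The main obstacle is the necessity direction: one has to verify that the retained subsequence really is a closed trail in $G$, rather than merely a cyclic list of edges in $L(G)$, which requires the endpoint analysis between consecutive retained edges, and the degenerate trivial-trail case must be handled so that the dominating property is not lost. The sufficiency direction is routine once the insertion idea is in place; the hypothesis $n\ge 4$ serves mainly to exclude small graphs in which $L(G)$ is too sparse to meaningfully support a Hamilton cycle.
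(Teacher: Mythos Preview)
The paper does not prove this theorem at all: it is quoted from Harary and Nash--Williams as a classical tool and invoked as a black box, so there is no in-paper argument to compare against. Your proposal is essentially the standard proof of the Harary--Nash--Williams theorem and is correct in outline.

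A couple of small points are worth tightening. In the sufficiency direction you implicitly assume the dominating closed trail has length $k\ge 3$, so that $e_1,\ldots,e_k$ already form a cycle in $L(G)$; under the paper's standing hypothesis that $G$ is connected and simple with $n\ge 4$ vertices, the remaining case is that the trail is the trivial trail at a single vertex $v$, whence $G=K_{1,n-1}$ and $L(G)=K_{n-1}$ is Hamiltonian directly. In the necessity direction it is worth noting that your retained subsequence cannot have exactly one or two edges: one retained edge would force all the $v_i$ to coincide, contradicting $v_{i-1}\ne v_i$, and two retained edges would force $e_i$ and $e_j$ to be parallel, contradicting simplicity. Hence the retained trail is either trivial (your degenerate case) or has length $\ge 3$, so it is a genuine closed trail in $G$. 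With these remarks your argument is complete.
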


 Note that every graph in $\{K_{2,n-2},K_{1,n-1}\}$ has a closed trail that contains
at least one vertex of each edge of $G$. Then by
Theorem~\ref{mainresult1} we have:

\begin{thm}[Brualdi and Shanny~\cite{brualdi}]\label{brualdi}
Let $G$ be a connected graph with $n\geq4$ vertices.  If $G$
satisfies $(\ref{eq:a})$ for every edge of $G$, then $L(G)$ is
hamiltonian.
\end{thm}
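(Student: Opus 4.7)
The plan is to combine Theorem~\ref{mainresult1} with Theorem~\ref{nash}. Since the latter reduces Hamiltonicity of $L(G)$ to the existence of a dominating closed trail in $G$, it suffices to exhibit such a trail in each of the three structural outcomes (i)--(iii) of Theorem~\ref{mainresult1}. The underlying observation I would use throughout is that any spanning closed trail of a subgraph $H \subseteq G$ becomes a dominating closed trail of $G$ as soon as every vertex of $V(G) \setminus V(H)$ has all of its $G$-neighbors inside $V(H)$.

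In case (i), $G$ is collapsible, hence supereulerian, so it contains a spanning closed trail, which is trivially dominating. In case (iii), where $G = K_{2,n-2}$ with parts $\{a_1, a_2\}$ and $\{b_1,\dots,b_{n-2}\}$, the four-cycle $a_1 b_1 a_2 b_2 a_1$ (available since $n\geq 4$) is a closed trail incident to every edge of $G$, hence is dominating. Case (ii) is the substantive one: let $H$ denote the maximal collapsible subgraph of $G$, so that $G/H = K_{1,t-1}$ with $3\leq t\leq n/2$. Because all $t-1$ leaves of the reduction are trivial and $t<n$, the center must be non-trivial, and so it is precisely the contracted image of $H$. By the ``same neighbor in $G$'' clause, every star-leaf vertex of $G$ has all of its $G$-neighbors inside $V(H)$, so a spanning closed trail of $H$ (which exists because $H$ is collapsible) is a dominating closed trail of $G$. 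The degenerate subcase $t=2$ from the ``moreover'' clause of Theorem~\ref{mainresult1} is handled analogously using that $G-\{v\}$ is collapsible for some $v$ in the $K_2$.

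Having produced a dominating closed trail in each case, Theorem~\ref{nash} converts it into a Hamilton cycle of $L(G)$, as required. The main obstacle I anticipate lies in case (ii): one must verify carefully that the center of $K_{1,t-1}$ corresponds to the non-trivial maximal collapsible subgraph $H$ and that every star-leaf vertex of $G$ really does have all of its neighbors inside $V(H)$, so that the spanning closed trail of $H$ dominates every edge of $G$. Once those verifications are in hand, the passage through Theorem~\ref{nash} is immediate.
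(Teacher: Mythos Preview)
Your proposal is correct and follows exactly the route the paper takes: derive Theorem~\ref{brualdi} from Theorem~\ref{mainresult1} together with Theorem~\ref{nash} by exhibiting a dominating closed trail in each of the three outcomes. The paper compresses this into a single remark (that $K_{2,n-2}$ and $K_{1,n-1}$ admit dominating closed trails), whereas you spell out case~(ii) explicitly via the collapsible core $H$; this extra care is warranted but does not change the underlying argument.
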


Theorem~\ref{brualdi} was improved by Clark in~\cite{clark} as
follows:

\begin{thm}[Clark~\cite{clark}]\label{clark}
Let $G$ be a connected graph on $n\geq6$ vertices, and let $p(n)=0$
for $n$ even and $p(n)=1$ for $n$ odd. If each edge of $G$ satisfies
\begin{equation} \label{eq:b}
d(x)+d(y)\geq n-1-p(n),
 \end{equation}
 then $L(G)$ is hamiltonian.
\end{thm}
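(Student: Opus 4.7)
The plan is to invoke the Harary--Nash-Williams theorem (Theorem~\ref{nash}), reducing the claim to showing that $G$ possesses a dominating closed trail. Since condition $(\ref{eq:b})$ falls short of Catlin's bound $(\ref{eq:a})$ by only one or two, the strategy is to embed $G$ into a slightly larger auxiliary graph $G^{*}$ that satisfies $(\ref{eq:a})$, apply Theorem~\ref{mainresult1} to $G^{*}$, and then transfer the resulting structure back to $G$.

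When $n$ is even I would form $G^{*}$ from $G$ by adding a single new vertex $v$ joined to every vertex of $G$; then $|V(G^{*})|=n+1$, each original edge $xy$ satisfies $d_{G^{*}}(x)+d_{G^{*}}(y)\ge(n-1)+2=n+1$, and each new edge $vy$ has degree sum at least $n+(d_G(y)+1)\ge n+2$, so $(\ref{eq:a})$ holds in $G^{*}$. When $n$ is odd I would instead add two universal vertices joined to each other, giving $|V(G^{*})|=n+2$ and, using $d(x)+d(y)\ge n-2$, the corresponding bound $d_{G^{*}}(x)+d_{G^{*}}(y)\ge n+2$ on every edge. The hypothesis $n\ge 6$ is used to suppress small sporadic configurations, and the parity gadget is chosen precisely so that the slack permitted by $p(n)$ is absorbed into the padding.

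Theorem~\ref{mainresult1} then applies to $G^{*}$, yielding three possibilities. If $G^{*}$ is collapsible, it is supereulerian, and I would take a spanning eulerian circuit of $G^{*}$ and short-circuit each visit through an auxiliary vertex: since the auxiliary vertices are universal, the two neighbors used by the circuit at any auxiliary visit can be reconnected by an edge or a short alternating walk in $G$, producing a closed trail of $G$ that still visits every vertex and therefore dominates $E(G)$. If the reduction of $G^{*}$ is $K_{1,t-1}$ or $K_{2,|V(G^{*})|-2}$, then the very high degree of the auxiliary vertices forces them to sit in the small part of this exceptional structure, which pins $G$ down to one of the explicit families (collapsible graph with a few pendants, $K_{1,n-1}$, $K_{2,n-2}$) whose dominating closed trail is immediate by inspection.

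The main obstacle is the short-circuiting step: the bypasses around the auxiliary vertices must be performed simultaneously in $G$ without repeating edges and without missing any vertex of $G$. This is most delicate in the odd case, where the two auxiliary vertices create two independent rerouting problems that may conflict. The resolution is to pair the neighbors of each auxiliary vertex along the eulerian circuit in a consistent cyclic order and use the universality of $v_1,v_2$ to realize each pair by a previously unused edge of $G$; the counting succeeds because the degree condition $(\ref{eq:b})$ guarantees that each vertex of $G$ has ample unused neighborhood in $G$ to accommodate the bypass. Checking this carefully in the bipartite exceptional case is where the argument requires the most bookkeeping, but the constraint $n\ge 6$ keeps the case analysis finite and manageable.
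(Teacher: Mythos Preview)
The paper does not give a self-contained proof of Theorem~\ref{clark}; it simply remarks that Theorem~\ref{liyang} (the Li--Yang structure theorem for graphs satisfying~(\ref{eq:b})) implies it.  In other words, the intended argument applies the trichotomy of Theorem~\ref{liyang} \emph{directly to $G$}: in case~(i) $G$ is collapsible, hence supereulerian; in case~(ii) the reduction is a star whose non-trivial centre is collapsible, and the pendant vertices all share a neighbour, so a dominating closed trail is immediate; in case~(iii) each of the finitely many listed graphs has an obvious dominating closed trail.  Harary--Nash-Williams then finishes.  No auxiliary padding graph is used.

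Your route via a padded graph $G^{*}$ is a genuinely different idea, but as written it has a real gap at exactly the point you flag as the ``main obstacle''.  Suppose $G^{*}$ is collapsible and $H$ is a spanning eulerian subgraph.  At each passage $a_i\,v\,b_i$ through the auxiliary vertex $v$ you propose to replace the two edges by a bypass in $G$.  Nothing in the hypotheses forces $a_ib_i\in E(G)$, and even when it is, that edge may already lie in $H$ or be claimed by another bypass; your appeal to ``ample unused neighbourhood'' is not an argument, because the pairing $\{a_i,b_i\}$ is dictated by $H$, not chosen by you, and the degree condition~(\ref{eq:b}) controls sums along edges of $G$, not adjacency between the particular endpoints $a_i,b_i$ handed to you by $H$.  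In the odd case the two auxiliary vertices compound the difficulty.  A second, related gap occurs in your treatment of the exceptional outputs of Theorem~\ref{mainresult1}: knowing that $v$ lies in the preimage of the centre of the $K_{1,t-1}$ reduction of $G^{*}$ tells you that preimage is collapsible, but deleting $v$ from a collapsible graph need not leave a collapsible (or even supereulerian) graph, so the transfer back to $G$ is again unjustified.  The clean fix is the paper's: apply the structure theorem to $G$ itself via Theorem~\ref{liyang}, which is tailored to the bound~(\ref{eq:b}) and makes the padding device unnecessary.
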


Motivated by Theorem~\ref{clark}, the authors of \cite{liyang}
considered replacing  the condition (\ref{eq:a}) in
Theorem~\ref{catlintrail} by (\ref{eq:b}), and classified the two
exceptional cases that can arise. The following result implies the
theorem above.

\begin{thm}[Li and Yang \cite{liyang}]\label{liyang}
Let $G$ be a connected graph of order $n\geq4$ and let $p(n)=0$ for
$n$ even and $p(n)=1$ for $n$ odd.  If each edge of $G$ satisfies
$(\ref{eq:b})$,
 then exactly one of the following holds:

$(i)$ $G$ is collapsible.

$(ii)$  The reduction of $G$ is $K_{1,t-1}$ for $t\geq3$ such that
all of the vertices of degree 1 are trivial and they have the same
neighbor in $G$, $t\leq\frac{n}2$. Moreover, if $t=2$, then
$G-\{v\}$ is collapsible for some vertex $v$ in the $K_2$.

 $(iii)$ $G$ is in $\{C_5,G_7,G_7',K_{1,n-1},K_{2,n-2},K_{2,n-3}'\}$, where
$K_{2,n-3}'$ is obtained from $K_{2,n-3}$ by adding  a pendant edge
on one of the vertices of degree $n-3$, and $G_7,G_7'$ are the
graphs shown in Figure.1.
\end{thm}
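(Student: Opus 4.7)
The plan is to invoke Theorem~\ref{mainresult1} as a bootstrap and carefully handle the extra slack coming from the weakened bound $d(x)+d(y)\geq n-1-p(n)$. Let $G_0$ denote the reduction of $G$. If $G_0=K_1$, then $G$ is collapsible and conclusion (i) holds, so I may assume $G_0$ has at least two vertices; as a reduction, $G_0$ is then connected, simple, and $K_3$-free.

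I would partition the edges of $G$ into \emph{strong} edges (those with $d(x)+d(y)\geq n$) and \emph{weak} edges (those with $n-1-p(n)\leq d(x)+d(y)\leq n-1$). If no weak edge exists, Theorem~\ref{mainresult1} immediately yields (i), (ii), or $G=K_{2,n-2}$, the last of which already appears in the list in (iii). Otherwise, I fix a weak edge $xy$ and exploit the fact that $W:=V(G)\setminus(N(x)\cup N(y)\cup\{x,y\})$ is nonempty (and of size at least $2$ when $n$ is odd), which severely restricts the placement of non-neighbors of $x$ and $y$ and is the source of the additional exceptional graphs.

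Next, I would pass to $G_0$ using that $d_{G_0}(v)$ inherits lower bounds from $d_G$, since each vertex $v$ of $G_0$ arises from contracting a (possibly trivial) collapsible subgraph $H_v$ of $G$. Combining the $K_3$-freeness of $G_0$ with these degree bounds, a short counting argument on $|E(G_0)|$ should force $G_0$ to be either a star $K_{1,t-1}$ (yielding (ii) after verifying that all degree-$1$ vertices are trivial with a common neighbor in $G$) or a graph on at most $7$ vertices with one of a handful of admissible shapes. The small cases are then reconciled with (iii) by direct inspection: $n=5$ gives $C_5$; $n=7$ produces $G_7$ or $G_7'$; and the graphs $K_{1,n-1}$, $K_{2,n-2}$, and $K_{2,n-3}'$ occur precisely when $G$ is itself an already-reduced bipartite-type graph that meets the weakened bound with equality.

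The hard part will be the enumeration of the small exceptional graphs, in particular the $7$-vertex graphs $G_7$ and $G_7'$, which become admissible only under the relaxed condition $d(x)+d(y)\geq n-2$ at $n=7$. These cannot be shortcut through Theorem~\ref{mainresult1}, since they fail its stronger hypothesis, so they must be extracted by a careful but finite case check on reduced $K_3$-free graphs of small order with prescribed degree lower bounds. Once this classification is completed, the remainder of the argument reduces to verifying that, for each surviving shape of $G_0$, only the configurations listed in (ii) or (iii) can actually satisfy the edge-degree condition on the original graph $G$.
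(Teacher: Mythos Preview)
The paper does not contain a proof of this statement. Theorem~\ref{liyang} is quoted from Li and Yang~\cite{liyang} as a known result; the present paper only uses Theorem~\ref{mainresult1} (also from~\cite{liyang}) together with Lemma~\ref{preserve} to prove its own Theorem~\ref{thm7}, and never revisits the weaker edge-degree condition~(\ref{eq:b}). Consequently there is nothing here to compare your proposal against.

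That said, a brief comment on the proposal itself: your outline is plausible in spirit but is really only a sketch. The step where you ``pass to $G_0$ using that $d_{G_0}(v)$ inherits lower bounds from $d_G$'' is the analogue of Lemma~\ref{preserve}, and would need to be stated and proved for the weaker bound~(\ref{eq:b}); the inequality in the paper's Lemma~\ref{preserve} loses exactly $|PM(u)|-1$, so the weakened hypothesis survives contraction, and this part is fine. The genuine work, as you acknowledge, is the finite classification of reduced $K_3$-free graphs meeting the relaxed edge-degree bound, which produces $C_5$, $G_7$, $G_7'$, and the bipartite families. Your proposal does not indicate how that enumeration is organized (e.g., by bounding $|V(G_0)|$ via Theorem~\ref{catlin}(iii), or by a direct degree-sequence argument), so as written it is an outline rather than a proof. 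If you want a benchmark, you would need to consult~\cite{liyang} directly.
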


\begin{center}
\scalebox{0.7}{\includegraphics{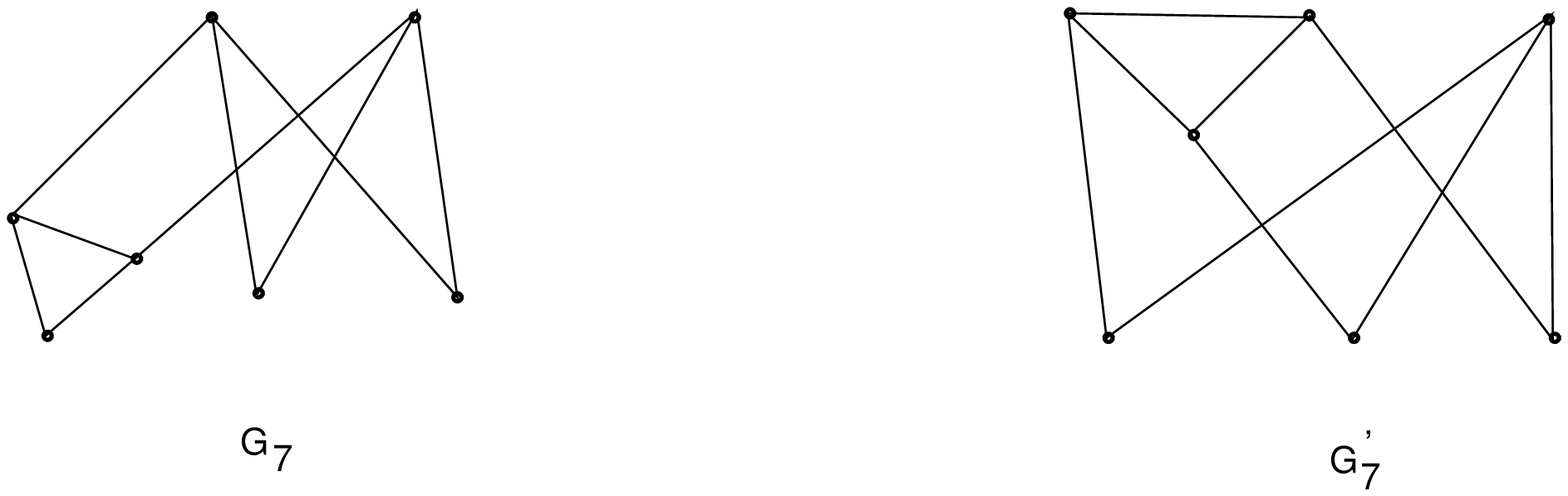}}

Figure 1. $G_7$ and $G_7'$.
\end{center}

in this note, we  strengthen  the edge-degree condition to consider
the spanning trail containing given edges. We obtain the following.

\begin{thm}\label{thm7}
Let $G$ be a connected simple graph on $n$ vertices, $X\subset E(G)$
satisfying $|X|\leq k$ and $k\geq 1$. If
\begin{equation*}
d(x)+d(y)\geq n+k
 \end{equation*}
for each edge $xy\in E(G)$, then $G$ either has a spanning closed
trail containing $X$, or $G$ is the graph obtain by adding an edge
on the two vertices of degree $n-2$ in $K_{2,n-2}$ for some even
number $n$ (denoted by $K_{2,n-2}^*$).
 \end{thm}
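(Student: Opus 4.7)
My plan is to apply Theorem~\ref{mainresult1} to the subgraph $G - X$ and then lift the resulting structure back to $G$ by incorporating the edges of $X$. For any edge $xy \in E(G-X)$, the edges of $X$ incident to $x$ are disjoint from those incident to $y$ (since $xy \notin X$ and $G$ is simple), so at most $|X| \le k$ edges of $X$ touch $\{x,y\}$, giving
\begin{equation*}
d_{G-X}(x) + d_{G-X}(y) \ge d_G(x) + d_G(y) - |X| \ge (n+k) - k = n.
\end{equation*}
Hence (assuming for the moment that $G - X$ is connected) Theorem~\ref{mainresult1} applies on $G - X$ and yields three cases: $G - X$ is collapsible, the reduction of $G - X$ is $K_{1, s-1}$ with $s \ge 3$ in the form specified there, or $G - X = K_{2, n-2}$.

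The lifting step dispenses with the collapsible case at once. Let $i_X(v)$ be the number of edges of $X$ incident to $v$, and set $R_X := \{v : i_X(v) \text{ is odd}\}$, so $|R_X|$ is even. Collapsibility of $G - X$ yields a spanning connected subgraph $H' \subseteq G - X$ with $O(H') = R_X$. Letting $H := H' \cup X$, the two summands in $d_H(v) = d_{H'}(v) + i_X(v)$ have the same parity (both are odd precisely on $R_X$), so every vertex of $H$ has even degree; since $H$ is also spanning and connected, it is a spanning Eulerian subgraph of $G$ containing $X$, which produces the desired spanning closed trail.

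It remains to handle the two exceptional structures of $G - X$. If $G - X = K_{2, n-2}$ with bipartition $A = \{a_1, a_2\}$, $B$, the edge-degree condition on $a_i b_j$ in $G$ gives $i_X(a_i) + i_X(b_j) \ge k$ for all $i, j$; comparing this with $\sum_v i_X(v) = 2|X| \le 2k$ forces $k = |X| = 1$ and $X = \{a_1 a_2\}$, so $G = K_{2, n-2}^*$, and direct inspection shows that $G$ admits a spanning closed trail containing $a_1 a_2$ if and only if $n$ is odd, yielding precisely the stated exception. If the reduction of $G - X$ is $K_{1, s-1}$, its center $c$ is adjacent in $G - X$ to every other vertex, and the hypothesis along each edge $\ell c$ (for $\ell$ a leaf of $G - X$) gives $d_G(\ell) \ge k+1$; combined with $d_G(\ell) \le 1 + |X| \le 1 + k$, we get $i_X(\ell) = k$. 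A double count on $\sum_v i_X(v) = 2|X|$ then forces $s = 3$, $|X| = k = 1$, and $X$ to join the two leaves, whereupon an Eulerian spanning subgraph of the central collapsible subgraph, combined with the triangle $c \ell_1 \ell_2 c$, yields the required trail.

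The main obstacle I expect is the case in which $G - X$ is disconnected; this can only occur when $k \ge 2$, and will require using the edge-degree hypothesis to bound the number of components and locate the $X$-edges that bridge them, before running the lifting argument on each component. A smaller, similar piece of bookkeeping is needed for the sub-case of conclusion (ii) in which the reduction of $G - X$ is $K_2$ (the $t = 2$ sub-case). Once these exceptional configurations are enumerated, the collapsibility lifting provides the trail in every scenario except $G = K_{2, n-2}^*$ with $n$ even.
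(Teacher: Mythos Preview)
Your approach is essentially the paper's: pass to $G-X$, verify $\xi(G-X)\ge n$, invoke Theorem~\ref{mainresult1}, and treat the three outcomes (the paper packages your $R_X$ lifting in the $G_X^*$ language, but the content is the same, and your handling of the $K_{1,s-1}$ and $K_{2,n-2}$ cases is more explicit than the paper's one-line dismissals). Your two flagged worries are harmless: if $G-X$ split into parts $A$ and $B$, any $X$-edge $ab$ crossing the cut would give $d_G(a)+d_G(b)\le (|A|-1)+(|B|-1)+|X|+1\le n+k-1$, a contradiction, so $G-X$ is always connected; and in the $t=2$ sub-case the lone pendant $v$ has $i_X(v)=k=|X|$, so every $X$-edge joins $v$ to the collapsible block $H=G-X-v$, and applying collapsibility of $H$ with $R$ equal to the $X$-endpoints (together with the non-$X$ neighbour of $v$ when $k$ is odd) produces the spanning closed trail through $X$ exactly as in your main lifting step.
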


A graph $G$ is $k$-hamiltonian if $G-X$ is hamiltonian for any
$X\subset V(G)$ and $|X|\leq k$. As a corollary of the proof for the
above theorem, one can easily find the following.

\begin{cor}\label{hamilton}\label{cor8}
For a graph $G$, if \begin{equation*}  d(x)+d(y)\geq n+k
 \end{equation*}
for each edge $xy\in E(G)$, then $L(G)$ is $k$-hamiltonian.
\end{cor}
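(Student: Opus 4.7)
The plan is to reduce Corollary \ref{cor8} to Theorem \ref{mainresult1} via the Harary--Nash-Williams equivalence. Given any $Y\subset V(L(G))$ with $|Y|\leq k$, identify $Y$ with the corresponding edge set $F\subset E(G)$ so that $L(G)-Y=L(G-F)$. By Theorem \ref{nash} it then suffices to exhibit a dominating closed trail in $G-F$.

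First I verify the edge-degree hypothesis passes to $G-F$. If $xy\in E(G-F)$ then $xy\notin F$, so no edge of $F$ joins $x$ to $y$, and at most $|F|\leq k$ edges of $F$ are incident to $\{x,y\}$; hence $d_{G-F}(x)+d_{G-F}(y)\geq d_G(x)+d_G(y)-k\geq n$. A short preliminary check also shows $G-F$ is connected on $n$ vertices: the hypothesis forces $d_G(v)\geq k+1$ for each non-isolated vertex $v$, so $G-F$ has no isolated vertices, and two non-trivial components would each need at least $\tfrac{n}{2}+1$ vertices, which is impossible.

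Applying Theorem \ref{mainresult1} to $G-F$ now yields three possibilities. In case (i), $G-F$ is collapsible and hence supereulerian, so it has a spanning (and therefore dominating) closed trail. In case (iii), $G-F=K_{2,n-2}$, and a Hamilton cycle in $L(K_{2,n-2})$ can be constructed directly. In the delicate case (ii), the reduction of $G-F$ is a star $K_{1,t-1}$ with $t\leq n/2$ whose $t-1$ leaves are trivial single vertices of $G-F$; since $G-F$ has $n>t$ vertices, the star's center must correspond to a non-trivial collapsible subgraph $H_c\subseteq G-F$. A spanning closed trail of $H_c$ then exists by collapsibility and visits every vertex of $H_c$, and every edge of $G-F$ outside $H_c$ is a pendant edge whose non-leaf endpoint lies in $V(H_c)$, so this trail dominates $G-F$.

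The exceptional graph $K_{2,n-2}^*$ of Theorem \ref{thm7} must be treated separately for Corollary \ref{cor8}. An edge of $K_{2,n-2}^*$ joining a degree-$(n-1)$ vertex to a degree-$2$ vertex has degree sum $n+1$, so the hypothesis forces $k\leq 1$; and for $k=1$ the vertex of $L(K_{2,n-2}^*)$ coming from the added edge between the two degree-$(n-1)$ vertices is adjacent to every other vertex, from which a short construction yields a Hamilton cycle in $L(K_{2,n-2}^*)-\{y\}$ for every single vertex $y$. The main obstacle will be case (ii) above, namely confirming under the degree bounds that the star's center is a non-trivial collapsible subgraph whose spanning closed trail dominates the pendant edges; the other cases reduce to direct constructions.
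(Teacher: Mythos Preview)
Your proposal is correct and follows essentially the same route as the paper: delete an arbitrary edge set $F$ with $|F|\le k$, observe that $\xi(G-F)\ge n$, apply Theorem~\ref{mainresult1} to $G-F$ to obtain a dominating closed trail in each of its three cases, and conclude via Theorem~\ref{nash}. The paper's own proof is a one-line reference to ``the arguments above,'' and your write-up simply supplies those details; your final paragraph treating $G=K_{2,n-2}^*$ separately is harmless but redundant, since for any single edge $F$ the graph $G-F$ already falls into case~(ii) or case~(iii) of Theorem~\ref{mainresult1} and is handled by your general argument.
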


\section{Proof of Theorem~\ref{thm7}}

Let $G$ be a graph and let $X\subset E(G)$. The graph $G_X$ is
obtained from $G$ by replacing each edge $e\in X$ with ends $u_e$
and $v_e$ by a $(u_e, v_e)$-path $P_e$ of length 2, where the
internal vertex $w(e)$ of the path $P_e$ is newly added. By the
definition of supereulerian graphs, one can see that if  $G_X$ is
supereulerian, then $G$ contains a spanning even subgraph (closed
trail) through the edges in $X$.

We need several previous results due to Catlin concerning
collapsible graphs as follows:

\begin{thm}[Catlin~\cite{catlin}]\label{catlin} Let $G$ be a connected graph.  Each of the
following holds.

$(i)$ If $H$ is a collapsible subgraph of $G$, then $G$ is
collapsible if and only if $G/H$ is collapsible; $G$ is
supereulerian if and only if $G/H$ is supereulerian.

$(ii)$ A graph $G$ is reduced if and only if $G$ contains no
non-trivial collapsible subgraphs. As cycles of length less than 4
are collapsible, a reduced graph does not have a cycle of length
less than 4.

$(iii)$ If $G$ is reduced and if $|E(G)|\geq3$, then $\delta(G)
\leq3$, and $2|V (G)|-|E(G)| \geq 4$.

$(iv)$ If $G$ is reduced and violates $(iii)$, then $G$ is either
$K_1$ or $K_2$.
\end{thm}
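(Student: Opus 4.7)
The plan is to pass to the subdivision graph $G'=G_X$ from Section~2: any spanning Eulerian subgraph of $G_X$ is forced to use both edges at each subdivision vertex $w(e)$, so it corresponds to a spanning closed trail of $G$ through $X$. Hence it suffices to show $G_X$ is supereulerian whenever $G\neq K_{2,n-2}^*$.

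I would first apply Theorem~\ref{mainresult1} to $G$ itself. The strengthened hypothesis $d(x)+d(y)\geq n+k$ with $k\geq 1$ rules out both of its exceptions: case (ii) would require a degree-$1$ leaf $u$, whose unique neighbor $v$ would then satisfy $d(v)\geq n+k-1\geq n$, impossible in a simple graph; case (iii) fails since every edge of $K_{2,n-2}$ has $d$-sum exactly $n<n+k$. Hence $G$ is collapsible, which already handles $|X|=0$ via supereulerian-ness of $G$.

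For $|X|\geq 1$ I work inside $G_X$. Note $|V(G_X)|=n+|X|$, and for every edge $xy\in E(G)\setminus X\subseteq E(G_X)$ we have $d_{G_X}(x)+d_{G_X}(y)=d_G(x)+d_G(y)\geq n+k\geq n+|X|=|V(G_X)|$. The only edges that can violate this bound are the subdivision edges $u_ew(e)$, where $d_{G_X}(u_e)+d_{G_X}(w(e))=d_G(u_e)+2$; this is the main obstacle. To overcome it, my plan is to apply Catlin's reduction (Theorem~\ref{catlin}) directly to $G_X$ and argue that every subdivision vertex $w(e)$ is absorbed into a non-trivial collapsible subgraph of $G_X$. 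The degree hypothesis gives $|N_G(u_e)\cap N_G(v_e)|\geq d_G(u_e)+d_G(v_e)-n\geq k\geq |X|$, and a global counting argument (charging the ``bad'' common neighbors blocked by $X$-edges at $u_e$ or $v_e$) should provide, for each $e\in X$, a common neighbor $z_e$ with $u_ez_e,v_ez_e\in E(G)\setminus X$. The resulting $4$-cycle $u_ew(e)v_ez_eu_e$ in $G_X$, combined with the collapsible structure already present in $G$ (which is in turn inherited by the original-vertex part of $G_X$), provides enough overlap to force $w(e)$ into the maximal collapsible subgraph. After contracting, the reduction collapses to $K_1$, so $G_X$ is collapsible and hence supereulerian.

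The remaining task is to pin down the single exception. The common-neighbor selection above can fail only when, for some $e\in X$, every common neighbor of $u_e$ and $v_e$ is ``saturated'' by other edges of $X$ incident to $u_e$ or $v_e$. Tracing the equalities in $d(u_e)+d(v_e)\geq n+k$ and in $|X|\leq k$ under this rigid scenario isolates $G=K_{2,n-2}^*$ with $n$ even and $X=\{ab\}$, the diagonal edge. A direct check confirms the exception: a spanning Eulerian subgraph of $K_{2,n-2}^*$ must use both edges at each degree-$2$ vertex $c_i$, forcing $a$ and $b$ to have the even degree $n-2$ in the trail, so including $ab$ would make their degree $n-1$ (odd), which is impossible in an Eulerian subgraph. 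The technical heart of the proof is therefore the common-neighbor/absorption argument in $G_X$, and the matching equality analysis that isolates $K_{2,n-2}^*$.
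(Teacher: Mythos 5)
Your proposal does not prove the statement under review. The statement here is Catlin's reduction theorem, which asserts four structural facts: that contracting a collapsible subgraph $H$ preserves collapsibility and the supereulerian property in both directions; that a graph is reduced exactly when it contains no non-trivial collapsible subgraph, so that reduced graphs contain no cycle of length less than $4$; that a reduced graph with $|E(G)|\geq 3$ satisfies $\delta(G)\leq 3$ and $2|V(G)|-|E(G)|\geq 4$; and that the only reduced graphs violating these bounds are $K_1$ and $K_2$. What you have written is instead a proof sketch of Theorem~\ref{thm7}, the paper's main result on spanning closed trails through a prescribed edge set $X$: you subdivide the edges of $X$ to form $G_X$, invoke Theorem~\ref{mainresult1}, and isolate the exceptional graph $K_{2,n-2}^*$. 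None of the assertions $(i)$--$(iv)$ of the Catlin theorem is argued anywhere in your text. Worse, your sketch explicitly \emph{uses} Theorem~\ref{catlin} as a tool (``apply Catlin's reduction (Theorem~\ref{catlin}) directly to $G_X$''), so it presupposes the very statement it is meant to establish; as an argument for that statement it is circular, and as written it simply addresses a different theorem.

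A proof of the actual statement would look nothing like your sketch. Part $(i)$ requires, for an arbitrary even-order set $R\subseteq V(G)$, lifting a spanning connected subgraph of $G/H$ with the appropriate odd-degree set back to $G$ and repairing the parities inside $H$ using the collapsibility of $H$; part $(ii)$ follows from the maximality of the contracted collapsible subgraph in the definition of the reduction together with the collapsibility of $C_2$ and $C_3$; parts $(iii)$ and $(iv)$ rest on an edge-counting bound for reduced graphs. None of these ingredients (parity sets, lifting through contractions, edge counts for reduced graphs) appears in your proposal. Note also that the paper itself states this theorem as a quoted result from the literature and offers no proof of it, so there is no internal argument to compare yours against; but in any case the proposal, whatever its merits as an outline for Theorem~\ref{thm7}, does not engage with the claim it was supposed to prove.
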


Assume $H$ is a collapsible subgraph of $G$. It is easy to see that
if a vertex $y\in V(G-H)$ and $|N(y)\cap V(H)|\geq2$, then the
subgraph induced by $\{y\}\cup V(H)$ is  collapsible. We denote by
$\xi(G)$ the value $\min\{d(x)+d(y):xy\in E(G)\}$,  and call
$d(x)+d(y)$ the edge degree of $xy$, denoted by $d_{G}(xy)$. We
denote $G_X'$ the reduction of $G_X^*$, where $G_X^*$ is the graph
obtained from $G_X$ by contracting the edges with two ends of degree
2.

Clearly, if $G-X$  is collapsible, then $G_X^*$ is collapsible and
then $G$ contains a spanning even subgraph containing $X$. We next
consider the reduction of $G_X^*$.

 Since $\xi(G)\geq n+k$, we have $\delta(G)\geq k+1$.

\begin{lem}\label{preserve}
Let $G$ be a graph on $n$ vertices satisfying  $\xi(G)\geq n+k,k\geq 1$, and $X\subset E(G), |X|\leq k$.  Then $\xi(G-X)\geq n$.
\end{lem}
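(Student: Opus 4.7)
The plan is to show that for every edge $xy$ surviving in $G-X$, the loss of degree sustained by $x$ and $y$ when we delete $X$ is at most $k$, so the edge degree drops by at most $k$.

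First I would fix an arbitrary edge $xy\in E(G-X)$, so in particular $xy\in E(G)\setminus X$. Let $a$ denote the number of edges of $X$ incident to $x$ and $b$ the number of edges of $X$ incident to $y$. Then by definition
\begin{equation*}
d_{G-X}(x)+d_{G-X}(y)=\bigl(d_G(x)+d_G(y)\bigr)-a-b\geq (n+k)-a-b,
\end{equation*}
using the hypothesis $\xi(G)\geq n+k$ applied to the edge $xy\in E(G)$.

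The one nontrivial point, and the step I would flag as the heart of the argument, is the claim that $a+b\leq|X|\leq k$. A priori $a+b$ double counts edges of $X$ having both ends in $\{x,y\}$. But $G$ is a simple graph, so the only possible edge with both endpoints in $\{x,y\}$ is $xy$ itself, and we chose $xy\notin X$. Hence the sets ``edges of $X$ incident to $x$'' and ``edges of $X$ incident to $y$'' are disjoint, giving $a+b\leq|X|\leq k$.

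Combining these two steps yields $d_{G-X}(x)+d_{G-X}(y)\geq n$, and since $xy$ was arbitrary, $\xi(G-X)\geq n$. I do not anticipate any further obstacle; the lemma is essentially a bookkeeping observation that leverages simplicity of $G$ together with the fact that $xy\notin X$.
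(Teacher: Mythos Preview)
Your proposal is correct and follows the same approach as the paper's own proof: both argue that for an edge $xy\in E(G-X)$, simplicity of $G$ together with $xy\notin X$ ensures the combined degree loss at $x$ and $y$ is at most $|X|\le k$, whence $d_{G-X}(x)+d_{G-X}(y)\geq (n+k)-k=n$. The paper compresses this into a single line, whereas you have spelled out the disjointness of the two edge sets explicitly, but the content is identical.
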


\begin{proof}
Sine $G$ is simple, $d_{G-X}(x)+d_{G-X}(y)\geq d_G(x)+d_G(y)-k\geq n$ for any edge $xy$. Thus, the claim holds.
\end{proof}

\begin{lem}\label{preserve}
Let $G$ be a graph on $n$ vertices satisfying  $\xi(G)\geq n$.
Suppose $G'$ is the reduction of $G$. Then $\xi(G')\geq |V(G')|$.
\end{lem}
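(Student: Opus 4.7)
The plan is to control how the edge-degree bound $\xi(G)\ge n$ transfers to the reduction $G'$, by tracking how each contraction shrinks a vertex's neighborhood. Let $G_1,\dots,G_{n'}$ be the maximal connected collapsible subgraphs of $G$, with vertex sets $U_1,\dots,U_{n'}$ and cardinalities $a_i:=|U_i|$; let $x_i\in V(G')$ denote the image of $U_i$. The crux of the argument is the following structural claim: for every $u\in U_i$ and every $j\ne i$,
\[
|N_G(u)\cap U_j|\le 1.
\]

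To establish this claim, the case $a_j=1$ is immediate from $G$ being simple. When $a_j\ge 2$ and some $u\in U_i$ has two or more neighbors in $U_j$, the observation cited just before the lemma shows that $G[U_j\cup\{u\}]$ is collapsible. If $a_i=1$, this is already a collapsible connected subgraph of $G$ properly containing the maximal $G_j$, a contradiction. If $a_i\ge 2$, I would apply Theorem~\ref{catlin}(i) with $H:=G[U_j\cup\{u\}]$ viewed as a collapsible subgraph of $G[U_i]\cup H$: since $G[U_i]\cap H=\{u\}$, the contraction $(G[U_i]\cup H)/H$ is isomorphic to $G[U_i]$ (the vertex set $V(H)$ collapses to a single vertex that plays the role of $u$), hence is collapsible. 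Theorem~\ref{catlin}(i) then forces $G[U_i]\cup H$ to be collapsible --- a collapsible subgraph properly containing $G_i$, again contradicting maximality.

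With the claim in hand, for any $u\in U_i$ we get
\[
d_G(u)=|N_G(u)\cap U_i|+\sum_{j\ne i}|N_G(u)\cap U_j|\le (a_i-1)+d_{G'}(x_i),
\]
since each external class $U_j$ contributes at most one, and only those with $x_ix_j\in E(G')$ contribute. Now pick any edge $xy$ of $G'$, write $x=x_i$, $y=x_j$, and choose a preimage edge $uv\in E(G)$ with $u\in U_i$, $v\in U_j$. Summing the above bound for $u$ and $v$ and invoking $d_G(u)+d_G(v)\ge n$ yields
\[
d_{G'}(x)+d_{G'}(y)\ge n-a_i-a_j+2=\sum_{k\ne i,j}a_k+2\ge (n'-2)+2=n',
\]
as required. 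The main obstacle is the structural claim on neighborhoods between maximal collapsible pieces; once that is secured, the remaining inequality is routine counting.
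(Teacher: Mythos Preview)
Your argument is correct and rests on the same key observation the paper uses: a vertex outside a maximal collapsible piece has at most one neighbor inside it, so contracting such a piece of size $a$ lowers the relevant degree sum and the vertex count by the same amount $a-1$. The paper applies this one contraction at a time (leaving the neighbor bound and the iteration implicit), whereas you isolate the structural claim explicitly and pass from $G$ to $G'$ in a single global step; the underlying counting is identical.
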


\begin{proof}
Assume that $u$ is a non-trivial vertex of $G'$, i.e. it is the
contraction of a maximal collapsible connected subgraph $H$. We call
$H$ the $preimage$ of $u$ and denote $PM(u)=H$.
  It is sufficient to consider the edges that
are incident to $u$ in $G/PM(u)$ and the others are clear. Assume
that $yu\in E(G/PM(u))$. Then there is a vertex $x\in V(PM(u))$ such
that $xy\in E(G)$. Note that the neighbors of $x$ out of $PM(u)$ are
also neighbors of $u$ in $G'$. Then by $d(x)+d(y)\geq n$ we have
\begin{equation} \label{eq:1}
\begin{split}
d_{G/PM(u)}(u)+d_{G/PM(u)}(y)&\geq d(x)-(|PM(u)|)+1+d(y)\\
&\geq n-|PM(u)|+1\\
&=|G/PM(u)|
\end{split}
 \end{equation}

The assertion follows from inequality~(\ref{eq:1}).
\end{proof}

Combining the two Lemmas above, Theorem~\ref{mainresult1} holds for
$G-X$.

Clearly, if $G-X$ satisfies $(i), (ii)$ of
Theorem~\ref{mainresult1}, then one can easily see that $G_X^*$ is
collapsible. We assume $G-X$ is $K_{2,n-2}$. Note that $\xi(G)\geq
n+k$. Since $|X|\leq k$ and $G$ is simple, the edge-degree
$d_{G-X}(xy)$ satisfying $d_{G-X}(xy)\geq d_G(xy)-k$ for each edge
$xy$. So each edge in $|X|$ must incident to all the edges in $G-X$
and $|X|=k$. Note that $G-X$ is $K_{2,n-2}$. Then $|X|=1$ and $G$ is
the graph obtained by adding an edge on the two vertices of degree
$n-2$ in $K_{2,n-2}$. $\hfill\Box$

\section{Proof of Corollary~\ref{cor8}} By the arguments above, one can see that there is a
dominating closed trail in $G-X$ for any $X\subset E(G)$ and
$|X|\leq k$. Thus, Corollary~\ref{hamilton} holds.$\hfill\Box$

\begin{rem}
Theorem~\ref{thm7} cannot be generalized to graphs allowed
multi-edges. The works in \cite{catlinspanning} and \cite{liyang}
are both hold for simple graphs, but it does not hold for graphs
allowed multi-edges. The reason is easily to obtain, we omit the
discussion.
\end{rem}

\section{Acknowledgements}
The research is supported by NSFC (No.11671296, 61502330),  PIT of Shanxi, Research Project Supported by Shanxi Scholarship Council of China, and Fund Program for the Scientific Activities of Selected
Returned Overseas Professionals in Shanxi Province.

\end{document}